\newcommand{\kk}{\Bbbk}
\newcommand{\kv}{{\kk[V]}}
\newcommand{\kvg}{{\kk[V]^G}}
\def\SL2{\operatorname{SL}_{2}(K)}
\def\GL2{\operatorname{GL}_{2}(K)}
\def\Ga{{\mathbb G}_{a}}
\def\INVSL2{$K[V]^{operatorname{SL}_{2}(K)}$}
\def\INVSO2{$K[V]^{operatorname{SO}_{2}(K)}$}
\def\INVGL2{$K[V]^{operatorname{GL}_{2}(K)}$}
\def\GL{\operatorname{GL}}
\def\SL{\operatorname{SL}}
\def\myforall{\textrm{ for all }}
\newtheorem{Lemma}{Lemma}[section]
\newtheorem{Theorem}[Lemma]{Theorem}
\newtheorem{Prop}[Lemma]{Proposition}
\theoremstyle{definition}
\theoremstyle{remark}
  \newtheorem{rem}[Lemma]{Remark}
\newtheoremstyle{Acknowledgments}
  {}
    {}
     {}
     {}
    {\bfseries}
    {}
     {.5em}
     {\thmname{#1}\thmnumber{ }\thmnote{ (#3)}}
\theoremstyle{Acknowledgments}
\newtheorem{ack}{Acknowledgments.}
\title[An Explicit Finite Separating Set]
{A finite separating set for Daigle and Freudenburg's counterexample to
  Hilbert's Fourteenth Problem}
\author{ Emilie Dufresne}
\address{Mathematics Center Heidelberg (MATCH) \\
Ruprecht-Karls-Uni\-ver\-si\-t\"at Heidelberg\\
Im Neuenheimer Feld 368\\
69120 Heidelberg, Germany}
\email{emilie.dufresne@iwr.uni-heidelberg.de}
\author{ Martin Kohls}
\address{Technische Universit\"at M\"unchen \\
 Zentrum Mathematik-M11\\
Boltzmannstrasse 3\\
 85748 Garching, Germany}
\email{kohls@ma.tum.de}
\date{\today}
\begin{document}


\begin{abstract}{ This paper gives the first explicit example of a finite
    separating set in an invariant ring which is not finitely generated, namely, for Daigle and Freudenburg's
   5-dimensional counterexample to Hilbert's Fourteenth Problem.
}\end{abstract}


\maketitle

\section{Introduction}
Hilbert's Fourteenth Problem asks if the ring of invariants of an algebraic group action on an affine variety is always finitely
generated. The answer is negative in general: Nagata \cite{NagataHilbert14}
gave the first counterexample in 1959. In characteristic zero, the
Maurer-Weitzenb\"ock Theorem~\cite{Weitzenboeck} tells us that linear actions
of the additive group have finitely generated invariants, but nonlinear
actions need not have finitely generated invariants. Indeed, there are several
such examples, the smallest being Daigle and Freudenburg's 5-dimensional
counterexample \cite{DaigleFreudenburg} to Hilbert's Fourteenth Problem.

Although rings of invariants are not always finitely generated, there always exists a finite  separating set \cite[Theorem 2.3.15]{DerksenKemper}. In other words, if $\kk$ is a field and if a group $G$ acts
 on a finite dimensional $\kk$-vector space $V$, then there always exists a
 finite subset $E$ of the invariant ring $\kvg$ such that if, for two points $x,y\in V$, we have $f(x)=f(y)$ for all $f\in E$, then $f(x)=f(y)$ for all $f\in
\kvg$. This notion was introduced by Derksen and Kemper \cite[Section
 2.3]{DerksenKemper}, and has gained a lot of attention in the recent years,
 for example see \cite{DomokosSep,DraismaSeparating,DufresneSeparating, DufresneElmerKohls,KemperSeparating,SezerSeparating}.

 The proof of the existence of a finite separating set is not
 constructive, and until now, no example was known for infinitely generated invariant rings. The main result of this paper
 is to give the first example: a finite separating set for  Daigle and Freudenburg's 5-dimensional counterexample to Hilbert's Fourteenth Problem.

\begin{ack}
We thank Gregor Kemper for funding visits of the first author to TU M\"unchen.
\end{ack}


\section{Daigle and Freudenburg's Counterexample}

We now introduce the notation used throughout the paper, and set up the example. We recommend the book of Freudenburg \cite{FreudenburgBook} as an excellent reference for locally nilpotent derivations.

Let $\kk$ be a field of characteristic zero, and let $\Ga$ be its additive group. If $V=\kk^{5}$ is a 5-dimensional vector space over $\kk$, then $\kv=\kk[x,s,t,u,v]$ is a polynomial ring in five
variables. Daigle and Freudenburg \cite{DaigleFreudenburg} define a locally
nilpotent $\kk$-derivation on $\kv$:
\[
D=x^{3}\frac{\partial}{\partial s} + s\frac{\partial}{\partial t} +
t\frac{\partial}{\partial u} + x^{2}\frac{\partial}{\partial v}.
\]
This derivation $D$ induces an action of $\Ga$ on $V$. If $r$ is an additional
indeterminate, then the corresponding map of $\kk$-algebras is
\begin{equation}\label{muDefinition}
\mu: \kv\rightarrow \kv[r],\quad f\mapsto
\mu(f)=\mu_{r}(f)=\sum_{k=0}^{\infty}\frac{D^{k}(f)}{k!}r^{k},
\end{equation}
where $\kv[r]\cong\kv\otimes_{\kk}\kk[\Ga]$. The induced action of  $\Ga$ on $\kv$ is:
\[
(-a)\cdot f:=\mu_{a}(f)\quad \myforall a\in \Ga, f\in \kv.
\]
In particular, for $a\in\Ga$, we have
\[
(-a)\cdot x=x, \quad (-a)\cdot s=s+ax^{3}, \quad (-a)\cdot t=t+as+\frac{a^{2}}{2}x^{3},\]\[
(-a)\cdot u=u+at+\frac{a^{2}}{2}s+\frac{a^{3}}{6}x^{3}, \quad (-a)\cdot v=v+ax^{2}.
\]
The invariant ring $\kv^{\Ga}$ coincides with the kernel of $D$. Define a grading on $\kv$ by assigning $\deg x = 1, \quad \deg s=\deg t= \deg u = 3,\quad \deg v=2$. As the action of $\Ga$ on $\kv$ and the derivation $D$ are homogeneous with respect to this grading, the ring of invariants is a graded subalgebra. We write $\kv^{\Ga}_+$ to denote the unique maximal homogeneous ideal of~$\kv^{\Ga}$.
Daigle and Freudenburg \cite{DaigleFreudenburg} proved that $\kv^{\Ga}=\ker D$ is not
finitely generated as a $\kk$-algebra. The main result of this paper is to exhibit a finite geometric separating set.

\begin{Theorem}\label{MainResult}
Let $\Ga$ act on $V$ as above. The following 6 homogeneous polynomials are invariants and form a separating set $E$ in $\kv^{\Ga}$:
\[
f_{1}=x,\quad f_{2}=2x^{3}t-s^{2}, \quad f_{3}=3x^{6}u-3x^{3}ts+s^{3},
\]
\[
f_{4}=xv-s,  \quad f_{5}=x^{2}ts-s^{2}v+2x^{3}tv-3x^{5}u,
\]
\[
f_{6}=-18x^{3}tsu+9x^{6}u^{2}+8x^{3}t^{3}+6s^{3}u-3t^{2}s^{2}.
\]
\end{Theorem}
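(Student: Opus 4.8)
The plan is to exploit that $f_1=x$ is invariant and that on the hyperplane $x=0$ the polynomial $f_4$ specializes to $-s$, so that the six polynomials let us read off enough coordinates to split $V$ into three $\Ga$-stable strata: $\{x\neq 0\}$, $\{x=0,\,s\neq 0\}$, and $Z:=\{x=0,\,s=0\}$. Since equality of all $f_i$ at two points forces equal $x$ (via $f_1$) and, when $x=0$, equal $s$ (via $f_4=-s$), any two points with the same image under $E$ automatically lie in the same stratum; it therefore suffices to show that on each stratum the $f_i$ separate exactly the pairs separated by all of $\kv^{\Ga}$. First, though, I would record the routine verification that $D(f_i)=0$ for each $i$, so the $f_i$ are genuine invariants.

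On $\{x\neq 0\}$ I would invoke the local-slice machinery for locally nilpotent derivations: since $D(s)=x^3$ and $x$ becomes a unit, the Dixmier map for the slice $s/x^3$ identifies $\kv^{\Ga}[x^{-1}]$ with $\kk[x^{\pm 1},\pi_t,\pi_u,\pi_v]$, where $\pi_t,\pi_u,\pi_v$ are the values of $t,u,v$ after flowing to $s=0$. A direct computation gives $\pi_t=f_2/(2x^3)$, $\pi_u=f_3/(3x^6)$ and $\pi_v=f_4/x$, so $f_1,f_2,f_3,f_4$ determine $x,\pi_t,\pi_u,\pi_v$ and hence the value of every invariant on this stratum. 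On $\{x=0,\,s\neq 0\}$ the action restricts to the derivation $s\,\partial/\partial t+t\,\partial/\partial u$ on $\kk[s,t,u,v]$, whose invariant ring is generated by $s$, $v$ and $w:=2su-t^2$; since $f_4,f_5,f_6$ specialize there to $-s$, $-s^2v$ and $3s^2w$, they recover $s,v,w$ whenever $s\neq 0$, and these classify the orbits, so two such points with equal $f_i$ lie in one orbit and agree on every invariant.

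The hard part will be the stratum $Z=\{x=0,\,s=0\}$, on which all six $f_i$ vanish identically; in fact $\{p:f_i(p)=0\ \forall i\}$ is exactly $Z$. Here the $f_i$ carry no information, so for the theorem to hold I must prove that \emph{no} invariant separates points of $Z$, i.e. that every positive-degree homogeneous element of $\kv^{\Ga}$ lies in the prime ideal $(x,s)$, equivalently that $Z$ lies in the null-cone. The plan is to expand a homogeneous invariant as $f=\sum_{i\geq 0}x^i f^{(i)}$ with $f^{(i)}\in\kk[s,t,u,v]$ and extract from $D(f)=0$ the recursion $D_0f^{(0)}=0$, $D_0f^{(1)}=0$, $D_0f^{(2)}=-\partial f^{(0)}/\partial v,\dots$, where $D_0=s\,\partial/\partial t+t\,\partial/\partial u$. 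This forces $f^{(0)}\in\kk[s,v,w]$, and the solvability in $\kk[s,t,u,v]$ of the equation for $f^{(2)}$ is an $s$-divisibility condition that forces $f^{(0)}|_{s=0}$ to be independent of $v$, hence a polynomial $\phi_0$ in $w|_{s=0}=-t^2$ alone.

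To finish I would show $\phi_0$ is constant by a degeneration argument: applying a suitable group element $a(\lambda)$ to the nearby point $(\lambda,0,t,u,v)$ and letting $\lambda\to 0$ yields, in the limit, a point of $Z$ whose $w$-coordinate is shifted by a quantity of the shape $\alpha^2 t+\tfrac{1}{12}\alpha^4$, while invariance of $f$ along the orbit keeps $f$ equal to its value at $(0,0,t,u,v)$. This gives $\phi_0(-t^2)=\phi_0\bigl(-t^2+\alpha^2 t+\tfrac{1}{12}\alpha^4\bigr)$ for all $\alpha$, forcing $\phi_0$ to be constant and, by homogeneity, zero. The delicate point here—the one I expect to require the most care—is controlling the higher terms $x^if^{(i)}$ along this family, since the $u$-coordinate blows up; the saving grace is that $f^{(0)},f^{(1)}\in\kk[s,v,w]$ depend on $u$ only through the convergent quantity $w$, and the $s$-divisibility already established keeps $f^{(2)}$ bounded. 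Granting this, every invariant is constant on $Z$, and assembling the three strata shows that two points with equal $f_i$-values are never separated by $\kv^{\Ga}$, so $E$ is separating.
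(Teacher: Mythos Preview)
Your stratification and the treatment of $\{x\neq 0\}$ and $\{x=0,\,s\neq 0\}$ coincide with the paper's argument (its Lemma on $\kv_x^{\Ga}$ and the map $\rho$). The divergence is on $Z=\{x=s=0\}$, where both you and the paper must show $\kv^{\Ga}_{+}\subseteq (x,s)\kv$. Your $x$-adic expansion and the solvability analysis for $f^{(2)}$ are correct: one checks (e.g.\ via the $\mathfrak{sl}_2$-decomposition of $\kk[s,t,u]$) that $\ker D_0\cap\operatorname{im}D_0=s\,\kk[s,v,w]$, so indeed $f^{(0)}|_{s=0}=\phi_0(-t^2)$ with no $v$-dependence. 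The paper reaches the same intermediate point more directly, observing that $\partial/\partial v$ commutes with $D$ and differentiating the $v$-dependence away.

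The degeneration step, however, has a genuine gap rather than a mere delicacy. With $a(\lambda)=\alpha\lambda^{-3/2}$ you want
\[
\lim_{\lambda\to 0} f\bigl((-a(\lambda))\cdot(\lambda,0,t,u,v)\bigr)\;=\;\phi_0\!\left(-t^2+\alpha^2 t+\tfrac{1}{12}\alpha^4\right),
\]
read off from the $i=0$ term. But by invariance the left side equals $f(\lambda,0,t,u,v)$, which tends to $\phi_0(-t^2)$ \emph{regardless of what $\phi_0$ is}. Hence if $\phi_0$ were nonconstant, the higher terms $\lambda^i f^{(i)}$ would necessarily contribute the nonzero difference in the limit---and they can, because $u'\sim\lambda^{-3/2}$ while from $i=3$ onward the recursion forces bare $u$'s into the picture (already $D_0 f^{(3)}=-\partial_v f^{(1)}-\partial_s f^{(0)}$ contains the term $2u\,g_w$, so $f^{(3)}\notin\kk[s,t,v,w]$ in general). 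Your ``saving grace'' bounds only $f^{(0)},f^{(1)},f^{(2)}$; it cannot bound all $f^{(i)}$, and the argument as sketched is circular: you would need precisely the conclusion ($\phi_0$ constant) to justify discarding the higher terms.

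The paper closes this case by an entirely different device. The substitution $\phi\colon s\mapsto xv$ defines a $\kk$-algebra map $\kk[x,s,t,u,v]\to\kk[x,v,t,u]$ intertwining $D$ with the triangular derivation $\Delta'=x^2\partial_v+xv\partial_t+t\partial_u$, whose kernel is explicitly $\kk[h_1,h_2,h_3,h_4]$ (by Maubach's theorem, or by van den Essen's algorithm). Every $h_j$ vanishes when $x=v=0$, so any element of $\ker\Delta'$ restricts to a constant there; but $\phi$ applied to a hypothetical invariant of the form $xp+sq+t^d$ (with $d>0$) yields an element of $\ker\Delta'$ whose restriction to $x=v=0$ is $t^d$, a contradiction. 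This replaces your limiting argument with a finite algebraic computation.
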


\begin{rem}
In \cite[Lemma 12]{Winkelmann}, Winkelmann shows that these six invariants
 separate orbits outside  $\{p\in V: x(p)=s(p)=0\}$, which as we will see
 later, is the easy case. (Note that in \cite{Winkelmann} there is a typo in the invariant
  we denoted by $f_{6}$.)
\end{rem}


\section{Proof of Theorem \ref{MainResult}}
In this section, we prove our main result. We start by establishing some useful facts.

\begin{Lemma}\label{Localizationinvariants}
$\kv^{\Ga}\subseteq \kk[f_{1},f_{2},f_{3},f_{4},\frac{1}{x}]$.
\begin{proof}
As $x$ is a constant, the derivation $D$ extends naturally to
$\kv_{x}$ via $D\left(\frac{f}{x^{n}}\right):=\frac{D(f)}{x^{n}}$ $\myforall f\in\kv, n\in{\mathbb N}$,
and we have $\kv^{\Ga}\subseteq (\kv_{x})^{\Ga}$. The element $\frac{s}{x^{3}}\in \kv_{x}$ satisfies
  $D\left(\frac{s}{x^{3}}\right)=1$, that is, it is a slice. By the Slice
  Theorem (see \cite[Proposition 2.1]{VanDenEssen}, or \cite[Corollary 1.22]{FreudenburgBook}), we obtain a generating set of the invariant ring
  $\kv_{x}^{\Ga}$ by applying $\mu$ to the generators of $\kv_{x}=\kk[x,s,t,u,v,\frac{1}{x}]$ and ``evaluating'' at $r=-\frac{s}{x^{3}}$. Therefore, we have
\begin{eqnarray*}
\kv_{x}^{\Ga}&=&\kk\left[\mu_{-\frac{s}{x^{3}}}(x),\mu_{-\frac{s}{x^{3}}}(s),\mu_{-\frac{s}{x^{3}}}(t),\mu_{-\frac{s}{x^{3}}}(u),\mu_{-\frac{s}{x^{3}}}(v),\mu_{-\frac{s}{x^{3}}}(\tfrac{1}{x})\right]\\
&=&\kk\left[f_{1},0,\frac{f_{2}}{2x^{3}},\frac{f_{3}}{3x^{6}},\frac{f_{4}}{x},\frac{1}{x}\right]. 
\end{eqnarray*}\end{proof}
\end{Lemma}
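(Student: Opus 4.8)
The plan is to localize at $x$ and invoke the Slice Theorem, which pins down the invariant ring completely once a slice is produced. First I would observe that $x\in\ker D$ (indeed $D(x)=0$), so $x$ behaves as a constant: the derivation $D$ extends to the localization $\kv_x=\kk[x,s,t,u,v,\tfrac1x]$ via $D(f/x^n)=D(f)/x^n$, and any element of $\ker D$ in $\kv$ stays in the kernel over $\kv_x$. This yields $\kv^{\Ga}\subseteq(\kv_x)^{\Ga}$, so it suffices to prove the sharper inclusion $(\kv_x)^{\Ga}\subseteq\kk[f_1,f_2,f_3,f_4,\tfrac1x]$.

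Second, I would exhibit a slice. Since $D(s)=x^3$, the element $\sigma:=s/x^3\in\kv_x$ satisfies $D(\sigma)=D(s)/x^3=1$. With a slice in hand the Slice Theorem applies: $\kv_x$ is a polynomial ring $(\kv_x)^{\Ga}[\sigma]$ over its invariants, and $(\kv_x)^{\Ga}$ is generated as a $\kk$-algebra by the images of the generators $x,s,t,u,v,\tfrac1x$ of $\kv_x$ under the projection $f\mapsto\mu_{-\sigma}(f)$, that is, by evaluating the finite Taylor expansion $\mu_r(f)=\sum_k D^k(f)r^k/k!$ at $r=-s/x^3$.

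Third comes the computational core. Because $D$ is nilpotent on each generator, each $\mu_r(\cdot)$ is a short polynomial in $r$; substituting $r=-s/x^3$ and clearing denominators I would obtain
\begin{gather*}
\mu_{-\sigma}(x)=f_1,\quad \mu_{-\sigma}(s)=0,\quad \mu_{-\sigma}(t)=\frac{f_2}{2x^3},\\
\mu_{-\sigma}(u)=\frac{f_3}{3x^6},\quad \mu_{-\sigma}(v)=\frac{f_4}{x},\quad \mu_{-\sigma}\!\left(\tfrac1x\right)=\frac1x.
\end{gather*}
Each of these lies in $\kk[f_1,f_2,f_3,f_4,\tfrac1x]$, since $\tfrac1x$ and the $f_i$ are all at our disposal (for instance $\tfrac{f_3}{3x^6}=\tfrac13 f_3\cdot(\tfrac1x)^6$). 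Hence $(\kv_x)^{\Ga}$, being generated by these six elements, is contained in $\kk[f_1,f_2,f_3,f_4,\tfrac1x]$, and combining with the first step gives the claim.

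There is no deep obstacle here; the essential insight is that after inverting the invariant $x$ the derivation acquires the slice $s/x^3$, natural precisely because $D(s)=x^3$. The only place demanding care is the arithmetic in $\mu_{-\sigma}(u)$, where the coefficients combine as $\tfrac12-\tfrac16=\tfrac13$ to reproduce the $s^3$ term of the numerator $f_3=3x^6u-3x^3ts+s^3$; matching the three numerators $f_2,f_3,f_4$ exactly is the one step I would verify by hand.
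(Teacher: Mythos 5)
Your proposal is correct and follows the paper's proof exactly: extend $D$ to $\kv_x$ using the invariance of $x$, observe that $s/x^3$ is a slice since $D(s)=x^3$, and apply the Slice Theorem to generate $(\kv_x)^{\Ga}$ by $\mu_{-s/x^3}$ of the generators $x,s,t,u,v,\tfrac1x$, yielding $f_1,0,\tfrac{f_2}{2x^3},\tfrac{f_3}{3x^6},\tfrac{f_4}{x},\tfrac1x$. The arithmetic you flag (the coefficient $\tfrac12-\tfrac16=\tfrac13$ in $\mu_{-s/x^3}(u)$) checks out, matching the numerators $f_2$, $f_3$, $f_4$ as claimed.
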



\begin{proof}[Proof of Theorem \ref{MainResult}.]
 First, note that $f_{i}$ is invariant for $i=1,\ldots,6$. Let
 $p_{i}=(\chi_{i},\sigma_{i},\tau_{i},\omega_{i},\nu_{i})$, $i=1,2$, be two
 points in $V$ such that $f_{i}(p_{1})=f_{i}(p_{2})$ for each
 $i=1,\ldots,6$. We will show that $f(p_{1})=f(p_{2})$ for all
 $f\in\kv^{\Ga}$. Since $f_{1}=x$, we have $\chi_{1}=\chi_{2}$. If
 $\chi_{1}=\chi_{2}\ne 0$, then Lemma \ref{Localizationinvariants} implies $f(p_{1})=f(p_{2})$ for all $f\in\kv^{\Ga}$. Thus, we may assume $\chi_{1}=\chi_{2}=0$. It follows that $\sigma_{1}=-f_{4}(p_{1})= -f_{4}(p_{2})=\sigma_{2}$.
Define a linear map
\[
\gamma: \kk^{5}\rightarrow \kk^{4},\quad
(\chi,\sigma,\tau,\omega,\nu)\mapsto(\sigma,\tau,\omega,\nu),
\]
and a $\kk$-algebra morphism
\[
\rho: \kk[x,s,t,u,v]\rightarrow \kk[s,t,u,v], \quad f(x,s,t,u,v)\mapsto
f(0,s,t,u,v).
\]
Define  a $\kk$-linear locally nilpotent derivation on $\kk[s,t,u,v]$ via
\[\Delta= s\frac{\partial}{\partial t} +
t\frac{\partial}{\partial u}.
\]
One easily verifies that $\Delta\circ\rho=\rho\circ D$. In particular, $\rho$ induces a map $\ker D\rightarrow \ker \Delta$. The kernel of $\Delta$ is known (or can be computed with van den Essen's Algorithm \cite{VanDenEssen}):
it corresponds to the binary forms of degree $2$, that is,
\begin{equation}\label{KernDelta}
\ker \Delta=\kk[s,2us-t^{2},v].
\end{equation} 
Since $\chi_{i}=0$,  we have
$f(p_{i})=\rho(f)(\gamma(p_{i}))$ for $i=1,2$ and any $f\in \kv$. Thus, to show $f(p_{1})=f(p_{2})$ for all $f\in \kk[V]^{\Ga}=\ker D$, it suffices to show
$f(\gamma(p_{1}))=f(\gamma(p_{2}))$ for all $f \in \rho(\ker D)\subseteq\kk[s,2us-t^{2},v]$.

If $\sigma_{1}=\sigma_{2}\ne 0$, then the values of $s,2us-t^{2},v$ on
$\gamma(p_{i})$ are uniquely determined by the values of
$\rho(f_{4})=-s, \,\,\rho(f_{5})=-s^{2}v$, and
$\rho(f_{6})=3s^{2}(2us-t^{2})$ on $\gamma(p_{i})$ for $i=1,2$. Since
\[\rho(f_{i})(\gamma(p_{1}))=f_{i}(p_{1})=f_{i}(p_{2})=\rho(f_{i})(\gamma(p_{2}))
\quad \myforall i=1,\ldots,6,
\] the case $\sigma_{1}=\sigma_{2}\ne 0$ is done. Assume
$\chi_{1}=\chi_{2}=\sigma_{1}=\sigma_{2}=0$, then by Proposition \ref{TheMainProposition}, $f(p_{1})=f(p_{2})=f(0,0,0,0,0)$ for all $f\in \kv^{\Ga}$.
\end{proof}


\begin{Prop}\label{TheMainProposition}
We have $\kv^{\Ga}\subseteq \kk \oplus (x,s)\kv$.
\end{Prop}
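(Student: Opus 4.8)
The plan is to work modulo the ideal $(x,s)$ and show that every invariant becomes a constant there. Since $D(x)=0$ and $D(s)=x^{3}\in(x,s)$, the derivation $D$ preserves $(x,s)$ and descends to the derivation $t\,\partial/\partial u$ on $\kv/(x,s)=\kk[t,u,v]$, whose kernel is $\kk[t,v]$; hence the reduction $\pi\colon\kv\to\kk[t,u,v]$ sends $\ker D$ into $\kk[t,v]$, and the Proposition is equivalent to $\pi(\ker D)=\kk$. (Equivalently, composing with the map $\rho$ of the preceding proof, one has $\pi(f)=\rho(f)|_{s=0}$, and the claim becomes $\rho(\ker D)\subseteq\kk\oplus s\,\kk[s,2us-t^{2},v]$.)

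First I would introduce a second grading, giving $x,s,t,u,v$ the weights $1,0,-3,-6,-1$. A direct check on the five summands of $D$ shows $D$ raises this weight by $3$; consequently, if $f\in\ker D$ then each weighted-homogeneous component of $f$ again lies in $\ker D$, so I may assume $f$ is homogeneous of some weight $\delta_{0}$ for this grading. Now combine this with Lemma~\ref{Localizationinvariants}: writing $f=\sum_{n}x^{n}P_{n}(f_{2},f_{3},f_{4})$ with $P_{n}\in\kk[y_{2},y_{3},y_{4}]$ and noting that $f_{2}=2x^{3}T$, $f_{3}=3x^{6}U$, $f_{4}=xW$ (where $T=f_{2}/2x^{3}$, $U=f_{3}/3x^{6}$, $W=f_{4}/x$ are the generators appearing in Lemma~\ref{Localizationinvariants}) all have weight $0$ while $x$ has weight $1$, weighted-homogeneity forces all but one $n$ to vanish; since $T,U,W$, hence $f_{2},f_{3},f_{4}$, are algebraically independent over $\kk(x)$, I conclude $f=x^{\delta_{0}}P(f_{2},f_{3},f_{4})$ for a single $P$.

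If $\delta_{0}>0$ then $f\in(x)\subseteq(x,s)\kv$, and if $\delta_{0}=0$ then, because $f_{2},f_{3},f_{4}\in(x,s)$, one gets $f\equiv P(0,0,0)\pmod{(x,s)}$; either way $f\in\kk\oplus(x,s)\kv$. The real work is the case $\delta_{0}=-N<0$. Here $x^{N}f=P(f_{2},f_{3},f_{4})$ with $f$ a polynomial, so $x^{N}\mid P(f_{2},f_{3},f_{4})$ in $\kv$. Setting $x=0$ gives $P(-s^{2},s^{3},-s)=0$, i.e. $P$ lies in the ideal $I=(y_{2}+y_{4}^{2},\,y_{3}+y_{4}^{3})$, which is exactly the kernel of $y_{i}\mapsto f_{i}|_{x=0}$ (the ideal of the curve $s\mapsto(-s^{2},s^{3},-s)$). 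The decisive observation is that $f_{2}+f_{4}^{2}=x\,g_{2}$ and $f_{3}+f_{4}^{3}=x\,g_{3}$ with $g_{2}=2x^{2}t+xv^{2}-2vs$ and $g_{3}=3x^{5}u-3x^{2}ts+x^{2}v^{3}-3xv^{2}s+3vs^{2}$, both in $(x,s)\kv$. I would therefore aim to upgrade $P\in I$ to $P\in I^{N}$; once this is known, substituting $y_{i}=f_{i}$ turns each product of $N$ generators of $I$ into $x^{N}$ times an element of $(x,s)^{N}\kv$, so $f=x^{-N}P(f_{2},f_{3},f_{4})\in(x,s)^{N}\kv\subseteq(x,s)\kv$ and $\pi(f)=0$, as wanted.

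The main obstacle is precisely the implication $x^{N}\mid P(f_{2},f_{3},f_{4})\Rightarrow P\in I^{N}$ for $N\ge 2$. I expect to prove this by induction on $N$: granting $P\in I^{k}$, one imposes that the next coefficient in the $x$-expansion of $P(f_{2},f_{3},f_{4})$ vanishes. For $k=1$, writing $P=(y_{2}+y_{4}^{2})A+(y_{3}+y_{4}^{3})B$, the vanishing of $\bigl(P(f_{2},f_{3},f_{4})/x\bigr)\big|_{x=0}=vs\bigl(-2A(-s^{2},s^{3},-s)+3s\,B(-s^{2},s^{3},-s)\bigr)$ forces $A+\tfrac{3}{2}y_{4}B\in I$, and a short rearrangement then exhibits $P\in I^{2}$; the analogous bookkeeping, controlled by the bounded degrees of $A,B$, should drive the induction. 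Equivalently one may phrase the target as the identity $\ker\bigl(\kk[y_{2},y_{3},y_{4}]\to\kv/(x^{N})\bigr)=I^{N}$, an assertion that the $f_{i}$ realize $x$ as a uniformizer transverse to the relation ideal; establishing this cleanly is the crux of the argument.
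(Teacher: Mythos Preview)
Your setup is elegant: the auxiliary weight grading $(1,0,-3,-6,-1)$ is a genuinely nice observation, and the reduction to $f=x^{\delta_{0}}P(f_{2},f_{3},f_{4})$ with a single $P$ is correct and clean. The cases $\delta_{0}\ge 0$ are fine. The problem is that the ``crux'' implication
\[
x^{N}\mid P(f_{2},f_{3},f_{4})\ \Longrightarrow\ P\in I^{N}
\]
is \emph{false} already for $N=2$, so the induction you sketch cannot be completed. Take $P=e_{2}-\tfrac{3}{2}\,y_{4}\,e_{1}=y_{3}-\tfrac{3}{2}y_{2}y_{4}-\tfrac{1}{2}y_{4}^{3}$, where $e_{1}=y_{2}+y_{4}^{2}$ and $e_{2}=y_{3}+y_{4}^{3}$. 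Since $(e_{1},e_{2})$ is a regular sequence, $I/I^{2}$ is free over $\kk[y_{4}]$ on $\overline{e_{1}},\overline{e_{2}}$, and the image $\overline{e_{2}}-\tfrac{3}{2}y_{4}\overline{e_{1}}$ is nonzero; hence $P\notin I^{2}$. Yet a direct computation gives
\[
P(f_{2},f_{3},f_{4})=x^{2}\Bigl(3x^{4}u-3x^{2}tv-\tfrac{1}{2}xv^{3}+\tfrac{3}{2}v^{2}s\Bigr),
\]
so $x^{2}\mid P(f_{2},f_{3},f_{4})$. Your own $k=1$ step already exhibits the failure: from $P=e_{1}A+e_{2}B$ you correctly deduce $A+\tfrac{3}{2}y_{4}B\in I$, but the ``short rearrangement'' only yields $P\equiv B\,(e_{2}-\tfrac{3}{2}y_{4}e_{1})\pmod{I^{2}}$, which is \emph{not} zero unless $B\in I$ (take $A=-\tfrac{3}{2}y_{4}$, $B=1$ for the simplest witness). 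Equivalently, your reformulation $\ker\bigl(\kk[y_{2},y_{3},y_{4}]\to\kv/(x^{N})\bigr)=I^{N}$ is wrong for $N\ge 2$.

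Note that this counterexample does not contradict the Proposition: the resulting invariant $3x^{4}u-3x^{2}tv-\tfrac{1}{2}xv^{3}+\tfrac{3}{2}v^{2}s$ visibly lies in $(x,s)$. What it shows is that ``$f\in(x,s)$'' is strictly weaker than ``$P\in I^{N}$'', and to push your scheme through one would have to track, at each stage, a growing list of auxiliary elements like $g_{2},g_{3}$ and $(g_{3}-\tfrac{3}{2}f_{4}g_{2})/x$ that happen to land in $(x,s)$; this is essentially rebuilding Tanimoto's infinite generating set, which the paper deliberately avoids. The paper's own argument takes a completely different route: it uses that $\partial/\partial v$ commutes with $D$ to strip the $v$-dependence from the residue $h(t,v)$ down to a single monomial $t^{d}$, and then applies the substitution $s\mapsto xv$ to land in a four-variable triangular derivation $\Delta'$ whose kernel is explicitly known (Maubach / van den Essen) and contains no monomial $t^{d}$.
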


This proposition is the key to the proof of Theorem
\ref{MainResult}. It could be obtained from a careful study of the generating
set of $\kv^{\Ga}$ given by Tanimoto \cite{Tanimoto}. We give a more
self-contained proof, which relies only on the van den Essen-Maubach
Kernel-check Algorithm (see \cite{VanDenEssen}, and \cite[p. 32]{MaubachPhD}). 

\begin{proof}[Proof of Proposition \ref{TheMainProposition}]
It suffices to show that $\kv^{\Ga}_{+}\subseteq (x,s)\kv$. By way of contradiction, suppose there exists $f\in
\kv^{\Ga}_{+}$ of the form $f=xp+sq+h(t,u,v)$,  where $p,q\in\kv$,  and $h(t,u,v)\ne 0$.

Without loss of generality, we can assume $f$ is homogeneous of positive degree. We apply the map
$\rho$ from the proof of Theorem~\ref{MainResult}. By Equation \eqref{KernDelta}, we have $f(0,s,t,u,v)\in \kk[s,2us-t^{2},v]$, so we have $f(0,0,t,u,v)=h(t,u,v)\in\kk[0,-t^{2},v]$, and we set $h(t,v):=h(t,u,v)\in\kk[t,v]$. Since $f$ is homogeneous, so is $h$, and there is a unique monomial $t^{d}v^{e}$ in $h$ such that the exponent $e$ of $v$ is maximal. Clearly, $D\circ \frac{\partial}{\partial v}=\frac{\partial}{\partial v}\circ D$, and so, for all $k$, we have
\[
\frac{\partial^{k}f}{\partial v^{k}}=x\frac{\partial^{k}p}{\partial
  v^{k}}+s\frac{\partial^{k}q}{\partial
  v^{k}}+\frac{\partial^{k}h(t,v)}{\partial v^{k}} \in \kk[V]^{\Ga}.
\]
If $d=0$, then taking $k=e-1$, implies $v$ is the only  monomial  appearing in
$\frac{\partial^{e-1}h(t,v)}{\partial v^{e-1}}$ (since $v$ has degree $2$, and $t$ has degree $3$, $t$ cannot have nonzero exponent). Thus, there is a homogeneous invariant of degree~$2$ of the form
$x\tilde{p}+s\tilde{q}+v\in \kk[V]^{\Ga}$, but as $x^{2}$ spans the space of invariants of degree $2$, we have a contradiction.

Assume now that $d>0$. If $k=e$, then $t^{d}$ is the only monomial
appearing in $\frac{\partial^{e}h(t,v)}{\partial v^{e}}$. Thus, replacing $f$ by
$\frac{\partial^{e}f}{\partial v^{e}}$, and dividing by the coefficient of
$t^{d}$, we can assume $f=xp+sq+t^{d}$, where $p,q\in\kv$  and  $d>0$. Since $f(x,s,t,u,v)\in \ker D$, Lemma~\ref{TheHDerivation}~(a) implies the element 
\begin{eqnarray}\label{TheContradictingElement}
g(x,t,u,v)&:=&f(x,xv,t,u,v)\nonumber\\
&=&x\tilde{p}+xv \tilde{q}+t^{d} \in \kk[x,t,u,v]
\end{eqnarray}
lies in the kernel of the derivation $\Delta':=x^{2}\frac{\partial}{\partial
  v}+xv\frac{\partial}{\partial t}+t\frac{\partial}{\partial u}$ defined on $\kk[x,t,u,v]$.
As no monomial of the form $t^{k}$ (with $k>0$) appears in the
  four generators of $\ker \Delta'$ (by Lemma \ref{TheHDerivation}~(b)), the
  monomial $t^{d}$ cannot appear as a monomial in $g\in
  \ker \Delta'$, and so we have a contradiction.
\end{proof}

In the following Lemma, we write $\kk[x,v,t,u]$ rather than
$\kk[x,t,u,v]$, so that the derivation $\Delta'$ is triangular.

\begin{Lemma}\label{TheHDerivation}
Define a $\kk$-algebra map
\begin{eqnarray*}
\phi:\kk[x,s,t,u,v]&\rightarrow& \kk[x,v,t,u],\\
 f(x,s,t,u,v)&\mapsto &\phi(f)(x,v,t,u):=
f(x,xv,t,u,v),\end{eqnarray*}
and a derivation $\Delta'$ on $\kk[x,v,t,u]$:
\[
 \Delta'=x^{2}\frac{\partial }{\partial v}+xv\frac{\partial }{\partial
 t}+t\frac{\partial }{\partial u}.
\]
It follows that
\begin{enumerate}
\item[(a)]  $\Delta'\circ\phi=\phi\circ D$, in particular, $\phi$ maps $\ker D$
  to $\ker \Delta'$;

\item[(b)]  $\ker \Delta'=\kk[h_{1},h_{2},h_{3},h_{4}]$, where
\[
h_{1}=x, \quad h_{2}=2xt-v^{2}, \quad h_{3}=3x^{3}u-3xvt+v^{3},
\]
\begin{eqnarray*}
h_{4}&=&8xt^{3}+9x^{4}u^{2}-18x^{2}tuv-3t^{2}v^{2}+6xuv^{3}\\
&=&(h_{2}^{3}+h_{3}^{2})/x^{2}.
\end{eqnarray*}\end{enumerate}
\begin{proof}
(a): For $f=f(x,s,t,u,v)\in \kk[x,s,t,u,v]$, we have
\begin{eqnarray*}
(\Delta'\circ \phi) (f)&=& (x^{2}\frac{\partial }{\partial v}+xv\frac{\partial }{\partial
 t}+t\frac{\partial }{\partial u})f(x,xv,t,u,v)\\
&=&x^{3}\phi(\frac{\partial f}{\partial s})+x^{2}\phi(\frac{\partial f}{\partial v})+xv\phi(\frac{\partial f}{\partial
 t})+t\phi(\frac{\partial f}{\partial u})\\
&=&\phi\left(  x^{3}\frac{\partial f}{\partial s}+x^{2}\frac{\partial f}{\partial v}+s\frac{\partial f}{\partial
 t}+t\frac{\partial f}{\partial u}\right)=(\phi \circ D) (f).
\end{eqnarray*}
(b): Since $\Delta'$ is a triangular monomial derivation of a four dimensional
polynomial ring, by Maubach \cite{Maubach},
its kernel is generated by at most four elements. In fact, \cite[Theorem 3.2,
Case 3]{Maubach} gives the same generators for $\ker \Delta'$, up to
multiplication by a scalar (the formula for $h_{4}$ contains a typo).

Alternatively, one can use van den Essen's Algorithm. As in the proof of
Lemma \ref{Localizationinvariants}, the derivation $\Delta'$ can be extended to $\kk[x,v,t,u]_{x}$,
and as $\Delta'(\frac{v}{x^{2}})=1$, the Slice Theorem \cite[Proposition 2]{VanDenEssen}
yields
\begin{equation}\label{kerHlocal}
(\ker
\Delta')_{x}=\mu_{-\frac{v}{x^{2}}}\left(\kk[x,v,t,u,\tfrac{1}{x}]\right)=\kk[h_{1},h_{2},h_{3},\tfrac{1}{x}],
\end{equation}
where $\mu$ is defined similarly as in Equation \eqref{muDefinition}. Consider the additional invariant $h_{4}:=(h_{2}^{3}+h_{3}^{2})/x^{2}\in
\kk[x,v,t,u]$. We claim $\ker \Delta'=\kk[h_{1},h_{2},h_{3},h_{4}]=:R$. Equation
\eqref{kerHlocal} implies $R\subseteq \ker \Delta'\subseteq R_{x}$. Next, we look at the ideal
of relations modulo $x$ between the generators of $R$,
\begin{eqnarray*}
I&:=&\{P\in \kk[X_{1},X_{2},X_{3},X_{4}]\mid P(h_{1},h_{2},h_{3},h_{4})\in
(x)\kk[x,v,t,u]\}\\
&=&\{P\in \kk[X_{1},X_{2},X_{3},X_{4}]\mid P(0,-v^{2},v^{3},-3t^{2}v^{2})=0\}\\
&=&(X_{1},X_{2}^{3}+X_{3}^{2})\kk[X_{1},X_{2},X_{3},X_{4}].
\end{eqnarray*}
Since $(h_{2}^{3}+h_{3}^{2})/x=xh_{4}\in R$, we have that
$P(f_{1},f_{2},f_{3},f_{4})/x\in R$ for every $P\in I$, and  the Kernel-check
algorithm implies $\ker \Delta'=R$ (see \cite[p. 184]{FreudenburgBook}).
\end{proof}
\end{Lemma}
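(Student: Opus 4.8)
The plan is to prove the two parts by quite different means. For part (a), the key structural remark is that $\phi$ is a $\kk$-algebra homomorphism, so each of the composites $\Delta'\circ\phi$ and $\phi\circ D$ is a $\phi$-derivation, i.e. satisfies the twisted Leibniz rule $\theta(fg)=\theta(f)\,\phi(g)+\phi(f)\,\theta(g)$. Two $\phi$-derivations that agree on algebra generators agree everywhere, so it suffices to check the five identities $\Delta'(\phi(w))=\phi(D(w))$ for $w\in\{x,s,t,u,v\}$. First I would record $\phi(x)=x$, $\phi(s)=xv$, $\phi(t)=t$, $\phi(u)=u$, $\phi(v)=v$, and then verify each identity; for instance $\Delta'(\phi(s))=\Delta'(xv)=x\cdot x^{2}=x^{3}=\phi(D(s))$, and similarly for the other four generators. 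The inclusion $\phi(\ker D)\subseteq\ker\Delta'$ is then immediate.

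For part (b) I would follow a localize-then-descend strategy, exactly as in the proof of Lemma~\ref{Localizationinvariants}. The first step is to invert $x$: since $x$ is $\Delta'$-constant and $\Delta'(v)=x^{2}$, the element $v/x^{2}$ satisfies $\Delta'(v/x^{2})=1$ and is a slice for the extended derivation on $\kk[x,v,t,u]_{x}$. By the Slice Theorem the projection onto the kernel is $\mu_{-v/x^{2}}$ (with $\mu$ as in \eqref{muDefinition}), so I would apply it to the generators $x,v,t,u,\tfrac1x$. I expect $\mu_{-v/x^{2}}(x)=h_{1}$ and $\mu_{-v/x^{2}}(v)=0$, while $t$ and $u$ produce scalar-times-power-of-$x$ multiples of $h_{2}$ and $h_{3}$ respectively, yielding $(\ker\Delta')_{x}=\kk[h_{1},h_{2},h_{3},\tfrac1x]$.

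The second step is to descend from the localization to the genuine kernel, showing it equals $R:=\kk[h_{1},h_{2},h_{3},h_{4}]$, where the extra generator $h_{4}=(h_{2}^{3}+h_{3}^{2})/x^{2}$ is introduced precisely to absorb the denominator obstructing descent. After noting $R\subseteq\ker\Delta'\subseteq R_{x}$, I would invoke the van den Essen–Maubach Kernel-check algorithm: it remains to determine the ideal $I$ of relations modulo $x$, namely the kernel of $X_{i}\mapsto h_{i}\bmod(x)$, and to check that $P(h_{1},h_{2},h_{3},h_{4})/x\in R$ for every $P\in I$. Reducing modulo $x$ sends $(X_{1},X_{2},X_{3},X_{4})$ to $(0,-v^{2},v^{3},-3t^{2}v^{2})$, so both $X_{1}$ and $X_{2}^{3}+X_{3}^{2}$ visibly lie in $I$.

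The hard part will be pinning down $I$ exactly, i.e. showing $I=(X_{1},X_{2}^{3}+X_{3}^{2})$ with no further hidden relations. My argument would be a height/primality comparison: the ideal $(X_{1},X_{2}^{3}+X_{3}^{2})$ is prime of height $2$ because $X_{2}^{3}+X_{3}^{2}$ is irreducible and the resulting quotient is a domain, while $I$ is itself prime of height $2$, being the kernel of the map onto the transcendence-degree-$2$ subring $\kk[v^{2},v^{3},t^{2}v^{2}]$ of a four-variable polynomial ring; two primes of equal height, one containing the other, must coincide. Once $I$ is identified, the Kernel-check reduces to the two liftings $X_{1}\mapsto h_{1}/x=1\in R$ and $X_{2}^{3}+X_{3}^{2}\mapsto (h_{2}^{3}+h_{3}^{2})/x=x\,h_{4}\in R$, which completes the proof. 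As an independent confirmation I would note that Maubach's classification of triangular monomial derivations \cite{Maubach} produces the same four generators directly.
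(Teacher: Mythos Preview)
Your proposal is correct and follows essentially the same route as the paper: part~(a) via the chain rule (you organize it as a check on generators using that $\Delta'\circ\phi$ and $\phi\circ D$ are both $\phi$-derivations, which is equivalent), and part~(b) via the Slice Theorem on $\kk[x,v,t,u]_{x}$ followed by the van den Essen--Maubach Kernel-check, with Maubach's classification cited as an alternative. Your height/primality argument for $I=(X_{1},X_{2}^{3}+X_{3}^{2})$ supplies a justification the paper leaves implicit; one small slip is that the target of the reduction map is the three-variable ring $\kk[v,t,u]$ rather than a four-variable ring, but this does not affect the transcendence-degree count.
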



\bibliographystyle{plain}
\bibliography{MyBib}
\end{document}